\newtheorem{theorem}{Theorem}[section]
\newtheorem{lemma}[theorem]{Lemma}
\newtheorem{prop}[theorem]{Proposition}
\newtheorem{claim}[theorem]{Claim}
\def\Kerek{Ker\'ekj\'art\'o} 
\newenvironment{proof}[1][Proof]{\textbf{#1.} }
{\hfill\rule{0.5em}{0.5em}\medskip}
\newenvironment{proof*}[1][Proof]{\textbf{#1.} }{}
\def\epsilon{\varepsilon}
\def\WB{$\omega$-bounded }
\begin{document}

\title{Euler-Poincar\'e obstruction for pretzels\\
with long tentacles \`a la Cantor-Nyikos }

\author{Alexandre Gabard}
\maketitle

\newbox\quotation
\setbox\quotation\vtop{\hsize
7.8cm \noindent

\footnotesize {\it Poincar\'e hat zuerst die Frage nach dem
Gesamt\-verlauf der reellen L\"osungen von
Differentialgleich\-ungen mit topo\-lo\-gischen Mitteln
behandelt.}

\noindent Hellmuth Kneser, 1921, in {\it Kurvenscharen auf
geschlossenen Fl\"achen {\rm \cite{Kneser_1921}}}.




}

\hfill{\hbox{\copy\quotation}}

\medskip

\newbox\abstract
\setbox\abstract\vtop{\hsize 12.2cm \noindent


\footnotesize \noindent\textsc{Abstract.} We present an avatar
of the Euler obstruction to foliated structures on certain
non-metric surfaces. This adumbrates (at least for the
simplest 2D-configurations) that the standard mechanism---to
the effect that the devil of algebra sometimes barricades the
existence of angelic geometric structures (obstruction theory
more-or-less)---propagates slightly beyond the usual metrical
proviso. Alas, the game is much more conservative than
revolutionary: in particular we enjoyed retrospecting at
Poincar\'e's argument of 1885 (announced in 1881).}

\centerline{\hbox{\copy\abstract}}

{\small \tableofcontents}

\section{Introduction}\label{sec1}

\subsection{Statement and nomenclature}

In a previous paper \cite{Gabard_2011_Ebullition}, we explored
foliated surfaces at the large scale by
contemplating (rather passively) how several
classic paradigms (like those of Poincar\'e-Bendixson or
Haefliger-Reeb)
transpose non-metrically.
A {\it thermodynamical metaphor\/} was found
convenient
to synthesize  a
body of
disparate results (including the irrational toric windings of
Kronecker, the allied labyrinths of Dubois-Violette, Franks,
Rosenberg and the surgeries of Peixoto--Blohin).
The metaphor
implicates the familiar solid-liquid-gaseous phases as
follows:

$\bullet$ If
one warms
sufficiently the fundamental group (by increasing its
rank~$r$,
say via
iterated punctures in a closed surface), then as the
temperature is high enough ($r\ge 4$) {\it metric\/} surfaces
are always {\it transitively\/} foliated
(by a dense leaf). (Observationally, such random motions
are  expected to occur
when
tracing out a curve following
some primate's fingerprints leading quickly to some
complicated `labyrinth' filling almost all of
the hand epiderm.)

$\bullet$ Conversely at low temperatures ($r\le 1$), e.g., in
the simply-connected
case,
the situation is completely frozen: {\it
any foliated structure is intransitive},
{\it regardless\/} of the metric proviso.

$\bullet$ Between the
first gaseous-volatile regime ($r\ge 4$) and the second
solid-frozen state ($0\le r\le 1$), we observe for $2\le r\le
3$ an intermediate liquid-phase, where the transitivity issue
truly depends on the detailed
topology.

In particular, each closed surface has a {\it critical
temperature\/}
at which it starts an ebullition, namely the least integer $k$
such that the $k$-times punctured surface permits a transitive
foliation (i.e., with a dense leaf). (This
boiling temperature is
computed in
\cite{Gabard_2011_Ebullition} for all surfaces except
the Klein bottle. We presume the answer
to be more-or-less implicit in the works of Hellmuth Kneser
from the 1920's (\cite{Kneser_1921}, \cite{Kneser24}), but as
yet we have not
assembled the details.)

Besides, still in \cite{Gabard_2011_Ebullition}, the question
of an avatar of the Euler-Poincar\'e obstruction was left
dormant. The present  note aims to
remedy this gap by showing:

\begin{prop}\label{Euler-obstruction:prop} An $\omega$-bounded
surface
of negative Euler characteristic
\hbox{$(\chi<0)$} cannot be foliated.
\end{prop}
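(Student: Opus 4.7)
My plan is to reduce Proposition~\ref{Euler-obstruction:prop} to the classical Euler--Poincar\'e obstruction on compact surfaces by exploiting the bagpipe structure of $\omega$-bounded surfaces due to Nyikos.

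First, I would invoke the Nyikos bagpipe theorem to decompose $M = B \cup P_1 \cup \cdots \cup P_n$, where $B$ is a compact ``bag'' (surface with boundary) and each $P_i$ is a long pipe attached along $\partial B$. Since long pipes contribute zero Euler characteristic, we have $\chi(B) = \chi(M) < 0$; so $B$ is a genuine pretzel in the sense of the classical Euler--Poincar\'e obstruction.

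Next, given a foliation $\mathcal{F}$ on $M$, I would show that $\mathcal{F}$ stabilises at infinity in each pipe. Modelling $P_i \cong S^1 \times \mathbb{L}_+$ with $\mathbb{L}_+$ the long ray, I would view the tangent line field as a map $g_i\colon P_i \to \mathbb{RP}^1$; each slice $g_i(s,\cdot)\colon \mathbb{L}_+ \to \mathbb{RP}^1$ is eventually constant (any continuous function from the long ray to a metric space being so). A Lindel\"of-style argument over the compact circle $S^1$ upgrades this pointwise stabilisation to a uniform one: there exists $\alpha^* < \omega_1$ such that on $S^1 \times [\alpha^*, \omega_1)$ the line field is translation-invariant in the long-ray coordinate and encoded by a single continuous angle function $\phi_i\colon S^1 \to \mathbb{RP}^1$. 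Inside this stabilised region I would then locate a closed curve $C_i$ isotopic to $\partial P_i$ that is either a closed leaf of $\mathcal{F}$ or transverse to $\mathcal{F}$: when $\phi_i$ avoids the vertical direction, a perturbed horizontal circle $s \mapsto (s, \beta_0 + \epsilon(s))$ with $\epsilon'(s) = \tan \phi_i(s)$ provides a closed leaf exactly when $\int_0^{2\pi} \tan \phi_i \, ds = 0$, and $\epsilon'(s) - \tan \phi_i(s) \equiv c \neq 0$ gives a transverse circle otherwise; at vertical loci of $\phi_i$ any horizontal curve is automatically transverse.

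Cutting $M$ along $C_1 \cup \cdots \cup C_n$ then yields a compact subsurface $B' = B \cup A_1 \cup \cdots \cup A_n$, where $A_i$ is the annulus bounded by $\partial P_i$ and $C_i$; we have $\chi(B') = \chi(B) < 0$, and $\mathcal{F}|_{B'}$ has boundary $\bigcup_i C_i$ consisting entirely of closed leaves or transverse circles. Capping each boundary circle by a disk carrying the concentric (respectively radial) line field extends $\mathcal{F}|_{B'}$ to a line field on a closed surface $\Sigma$ with exactly one index-$1$ singularity per disk; Poincar\'e--Hopf gives $\chi(\Sigma) = n$, while $\chi(\Sigma) = \chi(B') + n$, whence $\chi(B') = 0$---contradicting $\chi(B') < 0$.

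The hard part will be the stabilisation-plus-cross-section step: one must control the topology of $\phi_i$ when its degree as a map $S^1 \to \mathbb{RP}^1$ is nonzero (so that $\tan \phi_i$ develops unavoidable singularities), and still produce a leaf-or-transverse curve---perhaps by allowing $C_i$ to wrap non-trivially, or by a finer local analysis of $\mathcal{F}$ near the vertical loci of $\phi_i$.
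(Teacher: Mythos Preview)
Your argument has a genuine structural gap at the second step: you write ``Modelling $P_i \cong S^1 \times \mathbb{L}_+$,'' but Nyikos' theorem gives no such model. A long pipe is only required to become simply-connected when capped by a disc; the long cylinder $S^1 \times \mathbb{L}_{\ge 0}$ is just one example among an unclassifiable zoo (the long plane $\mathbb{L}^2$ minus a disc is another, and there are far wilder specimens). The paper stresses exactly this point: the case of cylindrical pipes does reduce trivially to the compact obstruction, and the whole content of the proposition is that it holds \emph{without} any control on the pipes. Your stabilisation argument---eventually-constant maps from $\mathbb{L}_+$, the angle function $\phi_i$, the search for a transverse or leaf circle---all presuppose the product structure $S^1 \times \mathbb{L}_+$ and collapse without it. (And even granting that structure, you yourself flag the nonzero-degree case of $\phi_i$ as unresolved.)

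The paper's route sidesteps the pipe taxonomy entirely. Rather than seeking a good cross-section inside each $P_i$, it collapses the inner and outer bicollar circles $C_i^{\pm}$ to points, creating isolated singularities, and then compares indices across the three resulting collapsed spaces (the closed bag $F$, the filled pipes $\Pi_i$, and spheres). The crux is the estimate $j(q_i)\le 1$ for the singularity sitting in the simply-connected filled pipe $\Pi_i$: this is obtained by showing, via Schoenflies and $\omega$-boundedness, that all ``loop'' leaves (both ends converging to $q_i$) are trapped in a compactum, so a large circle $K$ avoids them, and then a Bendixson--Hamburger index computation on $K$ gives $j\le 1$. That argument uses only simple-connectivity and $\omega$-boundedness of $\Pi_i$, never a product decomposition. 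If you want to salvage your approach, the replacement for your stabilisation step would have to be a statement of roughly this shape: in any $\omega$-bounded pipe, the foliation admits a circle isotopic to $\partial P_i$ that is either a leaf or transverse---but proving that in full generality is essentially as hard as the paper's Claim~\ref{claim}.
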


$\bullet$ Here, a {\it surface\/}
is as usual a 2-dimensional
manifold, that is a Hausdorff space everywhere locally
homeomorphic to the number-plane ${\Bbb R}^2$, yet without
imposing (a priori) a global metric
compatible with the topological structure.

$\bullet$ A space is
{\it $\omega$-bounded} if any countable subset has a compact
closure. This concept
turned out to be a
vivid substitute to compactness in the non-metric realm,
especially in view of {\it Nyikos' bagpipe theorem\/}
\cite{Nyikos84}. The latter extrapolates widely the {\it
classification of compact surfaces} initiated by M\"obius
circa 1860 published 1863 \cite{Moebius_1863} (after loosing
an international Parisian
contest for linguistical
issues) and subsequently revisited
(and generalized) by a long list of workers including Jordan,
Klein, Dyck 1888 \cite{Dyck_1888}, Dehn-Heegaard 1907, Brahana
1921.
If some compact manifold should
geometrize the myth\footnote{Courtesy of Claude Weber for
pointing out a text of Borges, establishing a one-to-one
correspondence between
conceptions of times through the ages and
1D-manifolds; e.g. $S^1\leftrightarrow \text{``l'\'eternel
retour''}$.} of a {\it finite universe},
\hbox{$\omega$-boundedness}
posits
the scenario of
a possibly infinite, yet {\it inextensible universe}. It is
indeed easy to
convince that $\omega$-boundedness implies
sequential-compactness, implying in turn {\it maxi\-fold},
i.e., a manifold which cannot be
continued to a strictly larger (connected) manifold of the
same dimensionality. This
little remark may help to grasp
the
substance (at least the aesthetical value) of
$\omega$-boundedness. The simplest (non-metric) prototype of
an $\omega$-bounded manifold is the {\it long line\/} ${\Bbb
L}$ or, in the bordered case, the {\it closed long ray\/}
${\Bbb L}_{\ge 0}$ (discovered  by Cantor in 1883).
Those spaces are both grandiose and claustrophobic:
one cannot find
a {\it Fluchtweg\/} to infinity in
denumerable time.

$\bullet$ {\it Foliations} refer as usual to those geometric
structures microscopically modelled after the partition of the
plane into
parallel
lines (we
focus on the surface case for simplicity).  {\it Regular
family of curves} is an older synonym, employed by \Kerek{
}1925~\cite[p.\,111]{Kerekjarto_1925} and Whitney
1933~\cite{Whitney33}.
Gauss in 1839
uses the term {\it Liniensystem} (in {\it Allgemeine Theorie
des Erdmagnetismus} \cite[p.\,135]{Gauss_1839}) and the German
speaking literature (especially \Kerek{ }1923
\cite[p.\,249]{Kerekjarto_1923}, Kneser 1921
\cite{Kneser_1921}, 1924 \cite{Kneser24}) used the jargon {\it
Kurvenschar}. Later Ehresmann and Reeb coined {\it
feuilletage}, from which the
modern nomenclature was derived.
For our
concern, the
real issue is that the foliated concept
(and likewise the manifold idea), being purely local in
nature, are not
imprisoned
in the metric realm.

$\bullet$ Finally, in the $\omega$-bounded
context the (singular) homology is a priori known to be of
finite type (cf.
optionally the discussion in \cite{Gabard_2011_Hairiness}). In
particular the
{\it Euler characteristic} ($\chi$=alternating sum of Betti
numbers starting with $b_0$
signed positively!\footnote{In the older literature it seems
that the sign of $\chi$ was the opposite one! Courtesy of
M.~Kervaire's
lecture notes borrowed from L.
Bartholdi.}) is {\it finite\/}
(unambiguously defined despite the
severe---but not so dramatic---absence of triangulation).
Actually a simple argument (reproduced below) shows
the
characteristic of the surface
to coincide with that of {\it Nyikos' bag} (which is a compact
bordered surface).

Admittedly, the theory of nonmetric manifolds is far from
popular,
yet
its bad reputation seems to
be slightly overdone,
since ironically much of the game is just a matter of
transposing metrical truths. The present note is no exception.
In practice, there is often an automatic
transfusion (of truths) from Lindel\"of subregions to the
whole manifold (e.g., for Jordan separation or the Schoenflies
bounding disc property, orientability, etc., cf. optionally
\cite{GaGa2010} and \cite{Gabard_2011_Ebullition}), whereas in
the present case we rather use a
{\it d\'evissage} into metric sub-pieces (cf.
Fig.\,\ref{Bag:fig}) where the
obstruction is classical.

\subsection{Metric background and historiographical links}

Specifically, our proof of (\ref{Euler-obstruction:prop})
relies on
the following (metrical) background,
reviewed subsequently so as not to relegate our
issue behind a mountain of preliminaries:

(1) {\it Poincar\'e's index formula\/} or
the variant thereof for foliations. (Main contributors:
Poincar\'e 1880 \cite{Poincare_1880}, 1881
\cite{Poincare_1881}, 1885 \cite[1885,
p.\,203--8]{Poincare_1885}, Dyck 1885
\cite[p.\,317--320]{Dyck_1885}, 1888 \cite[p.\,462--3;
499--501]{Dyck_1888}, Brouwer and Hadamard 1910
\cite{Hadamard_1910},
Hamburger 1924 \cite[p.\,58--62]{Hamburger_1924}, Hopf 1926
\cite{Hopf_1926} and Lefschetz about
the same period.
It is perhaps fair to recall that this circle of ideas had
been partly anticipated
by Gauss
and  Kronecker 1869, e.g., via the {\it Curvatura integra},
just to name two among several other
forerunners
carefully listed in Dyck
1888 \cite[p.\,463]{Dyck_1888} (e.g., Gauss 1839, Reech 1858,
M\"obius 1863, Poincar\'e 1881--86, Klein 1882, Betti 1885,
etc.). See also the recent historiography in
Mawhin 2000 \cite{Mawhin_2000},
reminding in particular Hermite's r\^ole, as an interface from
Kronecker to Poincar\'e.)

(2) {\it Bendixson-Hamburger's index formula} computing the
Poincar\'e index in terms of the local phase-portrait
(already (???) in Enrico\footnote{Little joke to
imitate Enriques--Chisini.} Poincar\'e 1885
\cite[p.\,203]{Poincare_1885}, Bendixson 1901
\cite[p.\,39]{Bendixson_1901}, Hamburger 1922
\cite{Hamburger_1922} (\cite{Hamburger_1924},
\cite{Hamburger_1940}), \Kerek{ }1925 \cite{Kerekjarto_1925},
etc.)
To elucidate the above question marks, we may agree perfectly
with the following comment
of Mawhin
\cite[p.\,118]{Mawhin_2000}:

\smallskip
{\small

``It is of interest to notice that Poincar\'e introduces
here a further definition for the {\it index of a cycle},
without worrying about proving its equivalence or relation
with the previous definitions. This time, the index is defined
as $\frac{E-I-2}{2}$, where $E$ (resp. $I$) is the number of
exterior (resp. interior) tangency points of the vector field
to the cycle. Proving the equivalence between this new
definition and the previous one is essentially the contents of
what is called today the {\it Poincar\'e-Bendixson index
theorem} [Bendixson, 1901].''

}

\smallskip

(3) Some geometric topology \`a la Schoenflies circa 1906,
plus some
variants due to R. Baer 1928, R.\,J. Cannon 1969. Compare
Gabard-Gauld 2010 \cite{GaGa2010} for a
pedestrian exposition of the fact that {\it any null-homotopic
Jordan curve in a
surface bounds a disc}. This holds {\it universally} (without
metric proviso).

\subsection{Related
phenomenology
and an application
\`a la Kaplan-Haefliger-Reeb}

For comparison,
{\it non-singular flows}
(fixed-point free ${\Bbb R}$-actions) are also regulated by an
Euler obstruction namely $\chi\neq 0$ (Gabard
2011~\cite{Gabard_2011_Hairiness}). This
weaker
numerical condition is {\it not} suited to
foliations,  as the {\it long plane\/} ${\Bbb L}^2$ (Cartesian
square of the long line ${\Bbb L}$),
despite having non-zero
$\chi=b_0-b_1+b_2=1-0+0=1$, {\it still\/} foliates (e.g., in
the trivial fashion by parallel long lines).

\begin{figure}[h]
\centering
    \epsfig{figure=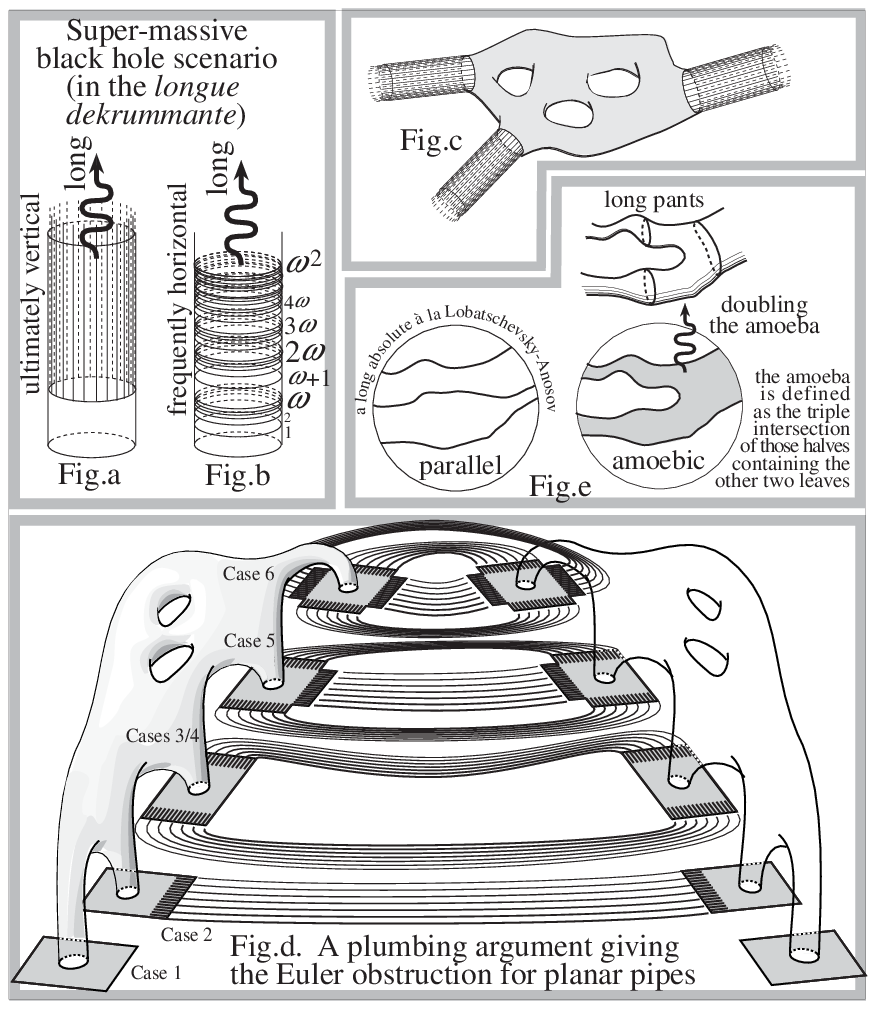,width=122mm}
\vskip-5pt\penalty0
  \caption{\label{Massive:fig}
  Super-massive black hole and plumbing with spaghetti}
\vskip-5pt\penalty0
\end{figure}

The converse of (\ref{Euler-obstruction:prop}) fails: a
surface with $\chi\ge 0$ does not necessarily foliate. The
simplest example is probably the sphere $S^2$ (as well-known
since Poincar\'e 1880 \cite{Poincare_1880}). For a non-metric
prototype,  we may consider a {\it long cigar\/} $S^1\times
{\Bbb L}_{\ge 0}$ (=circle crossed by the closed long ray)
capped off by a 2-disc. The resulting surface (resembling a
long glass) lacks foliations (cf. Baillif {\it et al.\/} 2009
\cite{BGG1}). This derives from the {\it super-massive black
hole scenario\/} materialized
by Cantor's long ray, according to which a (finally violent)
aspiration of leaves in the semi-long cylinder $S^1\times
{\Bbb L}_{\ge 0}$ occurs either as an ultimately
vertical collection of straight long rays or as a compulsive
infinite repetition of horizontal circle leaves parametrized
by a closed unbounded set (compare
Fig.\,\ref{Massive:fig}a,b). Thus for many
toy examples of $\omega$-bounded surfaces, say those
constructed by inserting long cylindrical pipes
into a pretzel (Fig.\,\ref{Massive:fig}c),
Proposition~\ref{Euler-obstruction:prop}
boils down
to the classic compact Euler obstruction.
With some little more work (plumbing) the same
trick applies to planar long pipes modelled after the long
plane (compare Fig.\,\ref{Massive:fig}d showing how to
``plumb'' with a replica the
bagpipe surface suitably truncated according to the
6 possible asymptotic
patterns described in
\cite{BGG1}; arithmetical details left to the indulgent
readers).
In short, the
little innovation of the present result
(\ref{Euler-obstruction:prop}) is that,
while presupposing
 no explicit knowledge
of the {\it pipes\/}
(whose
biodiversity
overwhelms any classification
scheme),
it still affords a qualitative prediction
in close accordance to
our metric
intuition.

An
application of (\ref{Euler-obstruction:prop}) can be given to
foliated structures
on simply-connected
$\omega$-bounded surfaces.
By a (non-metric) extension of Kaplan/Haefliger-Reeb's theory
(cf. \cite{Gabard_2011_Ebullition}),
{\it any leaf in a simply-connected surface separates the
surface} (\`a la Jordan). Thus given a configuration of 3
leaves they can either (compare Fig.\,\ref{Massive:fig}e) be
{\it parallel} (with one central leaf separating the other
two) or
bound an {\it amoeba} (if no leaf disconnects the other two).
In the $\omega$-bounded case, the
amoebic option cannot occur since the doubled amoeba yields a
{\it long pants\/} (Fig.\,\ref{Massive:fig}e) with $\chi=-1$,
hence not foliable by (\ref{Euler-obstruction:prop}). It
follows that the leaf-space is necessarily a {\it Hausdorff\/}
\hbox{$1$-manifold} (since given 2 leaves, any leaf chosen in
between imposes a separation \`a la Jordan, implying
a
separation \`a la Hausdorff in the quotient leaf-space).

Perhaps
it is
reasonable to expect higher-dimensional extensions of
(\ref{Euler-obstruction:prop}) for foliations of dimension- or
codimension-one. Maybe, one should first concretize
Nyikos-Gauld's grand programme of a 3D-bagpipe philosophy
(probably by now ``harpoonable'' via the Poincar\'e conjecture
of Perelman).

\section{Proof of the proposition}

 The proof of (\ref{Euler-obstruction:prop}) can
 be given two slightly different
flavors
by arguing either with {\it foliations} or (the
allied) {\it flows} (continuous ${\Bbb R}$-actions).

First, we may reduce to the case of
an oriented foliation by
passing to the double cover orienting the foliation (cf. e.g.,
\cite[4.1]{Gabard_2011_Ebullition}). This
doubles the Euler characteristic $\chi$, thereby preserving
its negativity.
The
(fundamental) theorem of \Kerek-Whitney (1925
\cite{Kerekjarto_1925}, 1933 \cite{Whitney33})---to the effect
that an oriented foliation admits a compatible flow---fails
non-metrically (\cite{GabGa_2011}), but
applies to
Lindel\"of (equivalently metric)
subregions. Hence the theory of flows can still be
advantageously exploited after
some precautions. In its foliated variant, the proof below is
pure but
uses the index formula for line-fields
(involving {\it semi-integral} indices), whereas working with
flows requires (beside \Kerek-Whitney) some ad hoc (but
classical) mechanisms for slowing down flow lines (Beck's
technique 1958 \cite{Beck_1958}), as we shall
discuss at the appropriate moment.

\medskip
\begin{proof}[Proof of \ref{Euler-obstruction:prop}]
Here is the common core of the argument (quite regardless of
which viewpoint is adopted, and concretely which version of
the index formula is applied). The first ingredient is Nyikos'
theorem \cite{Nyikos84} according to which
an \WB surface
has a {\it bagpipe decomposition}
$$
M=B\cup \textstyle\bigcup_{i=1}^n P_i.
$$
This is to mean that there is a compact bordered surface
$B\subset M$ (referred to as the {\it bag}) such that the
components of $M-{\rm int} B$ are bordered surfaces $P_i$ (the
{\it pipes}) which filled by a disc become simply-connected
surfaces $\Pi_i$. Hence $\chi(\Pi_i)=1-0+0=1$ (recall the
vanishing of the top-dimensional homology of an open Hausdorff
manifold, cf. e.g., Samelson 1965
\cite{Samelson_1965-homology}). It follows
by additivity of the characteristic (formally Mayer-Vietoris)
that $\chi(P_i)=0$ and
in turn that
\begin{equation}\label{char-Bag:Eq}
\chi(M)=\chi(B).
\end{equation}

As $B$ is compact its boundary $\partial B$ consists of
finitely many circles $C_i$ (say $n$), which we call the {\it
contours} of $B$. Each contour $C_i$
has a tubular neighborhood $U_i\approx S^1\times[-1,+1]$ (also
interpretable as a {\it bicollar}) whose border $\partial U_i$
splits in 2 circles $C_i^+$ and $C_i^-$. We agree that the
plus version $C_i^+$ is the one lying in the pipe $P_i$,
whereas the minus $C_i^-$ are all
in the bag $B$
(compare Fig.\,\ref{Bag:fig}, top part).

\begin{figure}[h]
\centering \epsfig{figure=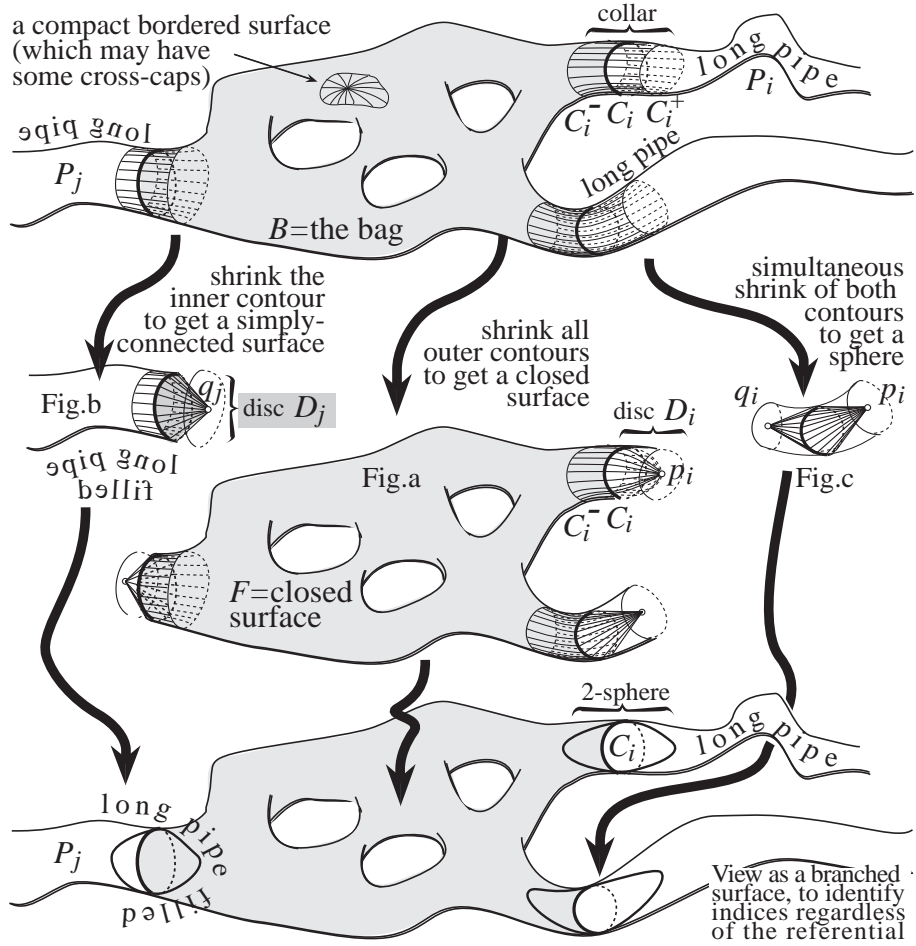,width=122mm}
\caption{\label{Bag:fig} Dissecting a bagpipe into elementary
pieces}
\end{figure}

Collapse this bagpipe configuration $M=B\cup \bigcup_{i=1}^n
P_i$ in essentially 3 ways:

(a)
shrink the outer contours $C_i^+$ to points $p_i$, to produce
a closed surface $F$ homeomorphic to the bag $B$ capped off by
$n$ discs (Fig.\,\ref{Bag:fig}a), hence
\begin{equation}\label{Eq:F-filled}
\chi(F)=\chi(B)+n;
\end{equation}

(b)
shrink the inner contours $C_i^-$ to points $q_i$, to get $n$
surfaces $\Pi_i$ respectively homeomorphic to the pipe $P_i$
capped off by a disc $D_i$ (Fig.\,\ref{Bag:fig}b), hence
simply-connected;

(c)
shrink both contours $C_i^+$, $C_i^-$ simultaneously  to
points $p_i$, resp. $q_i$, to get $n$ spaces homeomorphic to
the sphere $S^2$ (Fig.\,\ref{Bag:fig}c).

In each cases it is understood that
the foliated structure undergoes the same shrinkages,
thereby creating isolated singularities precisely at those
points where
circles are collapsed. If
one prefers to argue with flows, first choose a compatible
flow on some open (metric) neighborhood of the bag $B$, and
slow it down
to
be at rest on the circles $C_i$. This
is achieved via {\it Beck's technique} (1958
\cite{Beck_1958}).
Hence the points (=collapsed circles) are the unique
rest points of the flow, which can therefore be
dissociated  into several flows over the elementary pieces of
the dissection given by Fig.\,\ref{Bag:fig}.

Applying the Poincar\'e index formula (cf. e.g.,
(\ref{index-formula}) below) in those varied
surfaces, we get from
collapse (a)
\begin{equation}\label{Eq:Poincare}
\textstyle\sum_{i=1}^n i(p_i)=\chi(F),
\end{equation}
whereas
collapse (c) gives
\begin{equation}\label{Eq:sphere}
i(p_i)+i(q_i)=\chi(S^2)=2.
\end{equation}

\begin{claim}\label{claim}
In the filled pipes $\Pi_i=P_i\cup D_i$ generated by operation
{\rm (b)},
the following estimate holds:
\begin{equation}\label{Eq:estimates}
i(q_i)\le 1.
\end{equation}
\end{claim}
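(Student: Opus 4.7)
The plan is to deduce the bound $i(q_i)\le 1$ by applying the Poincar\'e--Bendixson--Hamburger index formula (the tangency variant) twice: once on the compact disc $D_i\subset \Pi_i$ containing the collapsed singularity $q_i$, and once on a carefully chosen compact truncation $K$ of the $\omega$-bounded pipe $P_i$.

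Recall that for a compact bordered surface $N$ carrying a flow with finitely many interior singularities, no boundary singularity, and generic quadratic tangencies along $\partial N$, the classical identity reads
\[
\chi(N)=\sum_{p\in \mathrm{int}\,N} i(p)+\frac{1}{2}\bigl(E-I\bigr),
\]
where $E$ (resp.\ $I$) counts the tangencies at which the flow locally stays in the exterior (resp.\ interior) of $N$. Applied to the disc $D_i$, whose only interior singularity is $q_i$, this yields
\[
i(q_i)=1-\frac{1}{2}\bigl(E_{D_i}-I_{D_i}\bigr),
\]
so the claim is equivalent to $E_{D_i}\ge I_{D_i}$ along $C_i=\partial D_i$. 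Since a tangency external to $D_i$ is precisely internal to $P_i$ at the same point of $C_i$ (and vice versa), this reformulates into $I_{P_i}\ge E_{P_i}$.

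To establish this, I would truncate the pipe to a compact sub-annulus $K\subset P_i$ bordered by $C_i$ on one side and by a second circle $C_i''$ chosen deep inside the tentacle of $P_i$. Since $K$ is a compact annulus and the ambient foliation has no interior singularity on $P_i$, the same index formula applied to $K$ yields
\[
0=\chi(K)=\frac{1}{2}\bigl(E_{P_i}-I_{P_i}\bigr)+\frac{1}{2}\bigl(E_{C_i''}-I_{C_i''}\bigr).
\]
If $C_i''$ can be arranged to be either transverse to the foliation (no tangencies, so the second summand vanishes) or a closed recurrent leaf deep inside $P_i$ (the tangent-boundary variant of Poincar\'e--Hopf then drops the $C_i''$ contribution altogether), we deduce $E_{P_i}=I_{P_i}$, whence $i(q_i)=1$, which is in particular $\le 1$.

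The main obstacle is producing such a truncating circle $C_i''$ inside a generically non-metric pipe. For the basic model $S^1\times \mathbb{L}_{\ge 0}$, the \emph{super-massive black hole} scenario recalled in \S1.3 is tailor-made: it forces either an accumulation of recurrent circle-leaves (any one of which is admissible as $C_i''$) or a foliation by vertical long rays (in which case any metric level $S^1\times\{t\}$ is a transversal). For a more exotic Nyikos pipe I would either invoke an analogous aspiration argument on each long cylindrical end, or slow the flow down to vanish on a small metric collar of $C_i$ via Beck's technique \cite{Beck_1958}, reducing the pipe-side accounting to a strictly compact Poincar\'e--Hopf calculation. The line-field (unoriented-foliation) variant proceeds identically, with the integer indices replaced by their semi-integral counterparts, as foreshadowed just above.
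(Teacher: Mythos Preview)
Your proposal has a genuine gap at exactly the point you flag as ``the main obstacle'': a truncating circle $C_i''$ that is either transverse to the foliation or a closed leaf need not exist in a general Nyikos pipe, and the fallbacks you sketch do not supply one. Take $M={\Bbb L}^2$ with its trivial foliation by horizontal long lines. The bag is a disc, there is a single pipe, and the identities of the proof of the proposition give $i(p_1)=\chi(F)=2$ and hence $i(q_1)=0$. Your annulus computation, \emph{if} a transversal or closed-leaf $C_1''$ existed, would force $i(q_1)=1$; since $i(q_1)=0$, no such circle can exist. Concretely, every essential Jordan curve in that pipe has tangencies with the horizontal foliation and no leaf is compact, so neither option is available; and Beck's slowdown near $C_i$ does nothing to manufacture a good curve further out. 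More structurally, the annulus identity you invoke shows that the tangency excess $E-I$ is the \emph{same} along every curve freely homotopic to $C_i$ in the non-singular region, so pushing the computation out to $C_i''$ yields no new information unless one uses something specific about the long geometry of the pipe; asking merely for $E_{C_i''}\ge I_{C_i''}$ is then exactly the inequality you are trying to prove.

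The paper's argument is organized around a different idea. The only obstruction to $i(q_i)\le 1$ comes from \emph{loops}: leaves of $\Pi_i$ both of whose ends converge to $q_i$ (the elliptic sectors in the Bendixson picture). By simple-connectivity of $\Pi_i$ and the universal Schoenflies theorem, each loop bounds a disc in $\Pi_i$; a separability argument in the small disc $D_i$ shows that the family of such discs admits a countable cofinal sequence, and $\omega$-boundedness then forces the closure of the union of all loops to be compact. This compactum sits inside a large disc whose boundary circle $K$ meets no loop, and Hamburger's formula together with a surgery that eliminates internal tangencies along $K$ (using that every half-leaf from an internal tangency eventually escapes, precisely because there are no loops through $K$) yields $j\le 1$. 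The substantive use of $\omega$-boundedness is thus to \emph{confine the loops}, not to locate a transversal circle.
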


We postpone
the
verification of (\ref{claim})
to complete first
the proof of
(\ref{Euler-obstruction:prop}). Assembling those five
equations \eqref{char-Bag:Eq}--\eqref{Eq:estimates} we get
$$
\underbrace{\sum_{i=1}^n\underbrace{(2-i(q_i))}_{\ge 1 \text{
by } \eqref{Eq:estimates}}}_{\ge
n}\buildrel{\eqref{Eq:sphere}}\over{=}\sum_{i=1}^n
i(p_i)\buildrel{\eqref{Eq:Poincare}}\over{=}\chi(F)\buildrel{\eqref{Eq:F-filled}}\over{=}\chi(B)+n
\buildrel{\eqref{char-Bag:Eq}}\over{=}\chi(M)+n,
$$
violating the assumption $\chi(M)<0$.
\end{proof}

\begin{proof}[Proof of Claim~\ref{claim}] As the filled pipe $\Pi_i:=P_i\cup D_i$ is
simply-connected,
let us imagine it pictured in the plane (yet a spicy version
thereof going at infinity in a funny way).
This depiction has no intrinsic meaning,
except reminding us that the Schoenflies theorem holds true in
{\it every\/} simply-connected surface. (Recall
from \cite{GaGa2010} that any Jordan curve in a
simply-connected surface bounds a 2-disc, regardless from any
metric assumption.)

The key trick (quite omnipresent in  Bendixson 1901
\cite{Bendixson_1901}, or Mather 1982 \cite{Mather_1982}) is
to
pay special attention
at leaves both of whose ends converge to the `origin' $q_i$.
Call such leaves {\it loops}, for short.

\begin{figure}[h]
\centering
    \epsfig{figure=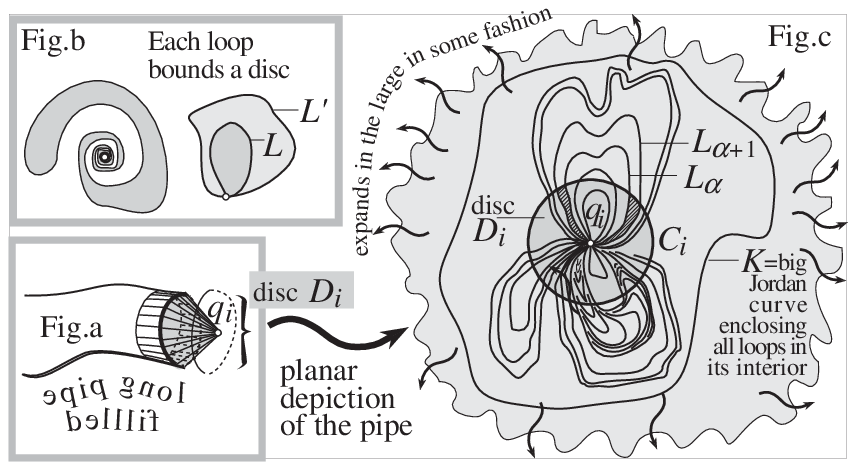,width=122mm}
  \caption{\label{Loops:fig}
  Blocking the proliferation of loops in a big circle $K$}
\end{figure}

Any loop can be completed to a Jordan curve by adding the
point $q_i$.
Hence, according to the (universal) Schoenflies theorem
\cite{GaGa2010}, any loop $L$ bounds a unique disc $D_L$ in
$\Pi_i$ (Fig.\,\ref{Loops:fig}b). Thus there is a
partial order on the set $\cal L$ of all loops by
decreeing $L\le L'$ whenever the inclusion $D_L\subset D_{L'}$
holds for the corresponding bounding discs. (Note: $D_L$ is
essentially what Bendixson calls a {\it nodal region}.)

Now observe that a well-ordered chain of loops $(L_{\alpha})$
has at most
countable `height', i.e. cardinality. Otherwise looking at the
successive symmetric differences ${\rm int }
D_{L_{\alpha+1}}-D_{L_{\alpha}}$ inside the disc $D_i$ gives
uncountably many pairwise disjoint open sets in the disc,
against its separability (compare Fig.\,\ref{Loops:fig}c).

Likewise there is at most countably many loops
pairwise incomparable w.r.t. the order $\le$ on $\cal L$. Thus
we can find a countable sequence of loops $(L_n)_{n<\omega}$
{\it cofinal} in $(\cal L, \le)$, i.e. for each $L\in {\cal
L}$ there is an integer $n<\omega$ such that $L\le L_n$. Then
the union of all loops $\Lambda:=\bigcup_{L\in\cal L} L$ and
the union $\Delta:=\bigcup_{n<\omega} D_{L_n}$ have the same
closures. Since the latter set,
$\Delta$, is $\sigma$-compact (hence Lindel\"of), it has a
compact closure by $\omega$-boundedness.

So the closure $\overline{\Lambda}$ is a compactum in a
simply-connected surface (which is not the sphere),
hence contained in a large disc $D$ (compare Gabard 2011
\cite[Lemma~2.34]{Gabard_2011_Ebullition}) whose boundary
contour $\partial D=K$ is a large circle enclosing
$\overline{\Lambda}$ in its interior. By construction the
circle $K$ does not
encounter any loop. Further we may assume that $K$ encloses
also the disc $D_i$ in its interior, and so (by Schoenflies
again)
$K$ is freely homotopic to $C_i$ (in fact the difference
$D-{\rm int} D_i$ is an annulus).
Accordingly,
one may compute the index $i(q_i)=i(q_i,C_i)$ w.r.t. to the
curve $K$. Finally, classical index theory (cf.
Lemma~\ref{index-small-if-no-loops} below) shows that $i(q_i,
K)\le 1$.
This is the desired estimate.
\end{proof}

\section{Memento of 2D-index theory}

\subsection{Index of an isolated singularity
(Poincar\'e, Bendixson, Hamburger,
etc.)}

Without developing the full theory in a coherent fashion, we
just recall enough
background to
establish the
next lemma required to complete our argument. From now on, we
switch (as it is quite customary) the generic notation $i$ for
the (Poincar\'e) index to $j\in\frac{1}{2}{\Bbb Z}$ to
emphasize its semi-integral nature.

\begin{lemma}\label{index-small-if-no-loops}
If a foliation (or a flow) on the punctured plane ${\Bbb
R}^2_{*}$ admits a circle $K$ enclosing the origin through
which no leaf is a loop (i.e. a leaf both of whose ends
converge to the puncture). Then the index $j(0, K)$ of the
origin w.r.t. the curve $K$ is $\le 1$.
\end{lemma}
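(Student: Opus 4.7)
My plan is to derive a contradiction by extending the foliation to $S^2$ and exploiting the global topology of non-singular foliations on a topological disc. Extend the given foliation on $\mathbb{R}^2_*$ to a foliation on $S^2 = \mathbb{R}^2_* \cup \{0, \infty\}$ with singularities permitted only at $0$ and $\infty$. By Poincar\'e--Hopf the indices satisfy $j(0,K) + j_\infty = \chi(S^2) = 2$. Suppose, for contradiction, that $j(0,K) \ge 3/2$ (foliated case) or $\ge 2$ (flow case); I focus on the integer case $j(0,K) \ge 2$, the half-integer case reducing to it by the orientation double cover already used in the proof of \ref{Euler-obstruction:prop}. Then $j_\infty \le 0$.

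In the simplest case $j(0,K) = 2$ with $\infty$ a regular point, the restricted foliation on the topological disc $S^2 \setminus \{0\}$ is non-singular. Under the stereographic identification of $S^2 \setminus \{0\}$ with $\mathbb{R}^2$ (sending $0$ to infinity), this becomes a non-singular foliation on the plane; by the classical theory of such foliations (Kaplan's theorem, cf.\ \cite{Gabard_2011_Ebullition}), each open leaf is a properly embedded line with both ends converging to the single end of $\mathbb{R}^2$, equivalently to the point $0$ in $S^2 \setminus \{0\}$. Back in $\mathbb{R}^2_*$, removing $\infty$ merely splits the unique leaf through $\infty$ into two half-leaves; every other leaf remains intact and forms a \emph{loop} at $0$. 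Choose any $p \in \mathbb{R}^2_* \setminus D$ outside $K$ and off the two exceptional half-leaves (a negligible 1-dimensional set): the loop through $p$ has both ends at $0 \in \mathrm{int}(D)$ yet passes through $p \notin D$, so it must cross $K$---contradicting the no-loop hypothesis.

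The main obstacle is the case $j(0,K) > 2$, in which $j_\infty < 0$ and $\infty$ is a genuine singularity of the extended foliation. For this I would invoke the Bendixson sectorial description at $0$: the condition $j(0,K) > 1$ forces $e - h \ge 1$, where $e,h$ count elliptic and hyperbolic sectors, so elliptic sectors exist at $0$, and the loops filling such a sector must asymptote to a ``boundary separatrix''. By the same global argument, this separatrix either extends out to $\infty$ or returns to $0$ as a saddle connection; in either event, loops of arbitrarily large Euclidean diameter populate the sector, and by continuity of the loop family some loop meets $K$. The delicate subcase of a degenerate index-0 singularity at $\infty$ (where the above disc-theoretic argument does not apply verbatim) should also reduce to the generic case by a small perturbation that preserves $j(0,K)$, yielding the desired bound $j(0,K) \le 1$.
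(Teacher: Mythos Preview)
Your opening move---compactifying to $S^2$ and invoking Poincar\'e--Hopf to write $j(0,K)+j_\infty=2$---is sound, and your treatment of the special case $j(0,K)=2$ with $\infty$ a \emph{regular} point is correct: Kaplan's theorem on nonsingular planar foliations then forces every leaf (save one) to be a loop at $0$, and any such loop through a point outside $K$ must cross $K$.

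The gap is that this special case is essentially the only one you actually prove. The two residual cases are not handled:
\begin{itemize}
\item When $j_\infty=0$ but $\infty$ is a genuine (degenerate) singularity, you appeal to ``a small perturbation that preserves $j(0,K)$'' to make $\infty$ regular. In the $C^0$ setting relevant here there is no such perturbation lemma available, and even in the smooth category cancelling an index-$0$ singularity is a nontrivial statement that you would need to supply.
\item When $j(0,K)>2$, you invoke the Bendixson sectorial decomposition at $0$ to produce an elliptic sector. That decomposition presupposes a finite sectorial structure (typically an analytic or at least tame singularity); for an arbitrary isolated singularity of a $C^0$ foliation it need not exist, so the formula $j=1+(e-h)/2$ is not at your disposal. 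Even granting sectors, the claim that the loops filling an elliptic sector must grow without bound (hence cross $K$) depends on knowing that the bounding separatrix escapes to $\infty$, which again needs argument.
\end{itemize}
Finally, the reduction of the half-integral case to the integral one via the orientation double cover is not automatic: you must check that the ``no loop through $K$'' hypothesis lifts to the branched cover, which is not immediate since a loop downstairs may or may not lift to a loop upstairs.

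By contrast, the paper's argument avoids all global structure theory. It works directly with the polygonal circuit $K'$ obtained from $K$ by the horseshoe trick, and uses the no-loop hypothesis (plus a local Poincar\'e--Bendixson step) to surgically trade each pair of concavities on $K'$ for convexities, driving $c'$ to zero; Hamburger's formula $j=1-\frac{c-c'}{4}$ then gives $j\le 1$ immediately. This is both more elementary and more robust: it needs no assumption on the nature of the singularity at $0$ or at $\infty$.
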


We shall derive this from
the classical formula for the Poincar\'e index due to
Bendixson \cite[p.\,39]{Bendixson_1901},
which
\Kerek{ }{\rm\cite[p.\,108--9]{Kerekjarto_1925}}
assigns to Hamburger 1922 \cite{Hamburger_1922}.
%
%
(Having no access to Hamburger's paper, we tried to
reconstruct a proof,
although
the original is surely more
readable than what to be found bellow.)

\smallskip
{\small {\sc Added in proof.} Meanwhile several presentations
in book forms are available, e.g. Lefschetz 1957
\cite[p.\,222]{Lefschetz_1957}, Hartman 1964 \cite[p.\,173,
Thm 9.2]{Hartman_1964} and Andronov {\it et al.} 1973
\cite[p.\,511]{Andronov_1973}.
{\it Little warning (for `googlers'):\/} what is called below
{\it Hamburger's formula}
(after \Kerek{ }\cite[p.\,108]{Kerekjarto_1925}) is more
commonly known as {\it Bendixson's index formula}, and less
frequently also as the {\it Poincar\'e-Bendixson index
formula}, which is
quite fair in view of the
formula $J=\frac{E-I-2}{2}$
to be found in Poincar\'e 1885 \cite[1885,
p.\,203]{Poincare_1885}.

}

\begin{lemma}\label{Hamburger:lemma} Given a
foliation of the plane with an isolated singularity $p$,
there is always a polygonal circuit $P$ enclosing the
singularity composed of a finite number of leaf-arcs and of
cross-arcs. Call a vertex of this polygon convex or concave
according as the leaf through it can instantaneously move
outside
the polygon or stays in its interior (cf. \,{\rm
Fig.\,\ref{Hamburg:fig}a}, where the shaded region depicts the
residual component of $P$ containing $p$). Let $c$, $c'$ be
the numbers of convex resp. concave vertices. Then the
Poincar\'e index of the singularity $p$ is given by
\begin{equation}\label{Hamburger:formula}
j(p)=1-\frac{c-c'}{4}.
\end{equation}
\end{lemma}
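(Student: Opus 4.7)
The plan has three stages: (i) construct a polygonal circuit $P$ whose cross-arcs meet the leaf-arcs perpendicularly; (ii) reduce $\oint_P d\phi$ (where $\phi$ is the foliation angle) to the tangent-angle integral $\oint_P d\psi$; and (iii) invoke Hopf's Umlaufsatz to identify $\oint_P d\psi$ with $2\pi-\sum(\text{ext.~angles})$.

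To construct $P$, I would endow a disc neighborhood $D$ of $p$ with an auxiliary Riemannian metric; then the orthogonal foliation $\mathcal{F}^{\perp}$ of the given foliation $\mathcal{F}$ is well-defined and regular on $D\setminus\{p\}$. Covering a small Jordan loop around $p$ by finitely many flow-boxes of $\mathcal{F}$, one can alternate segments of leaves of $\mathcal{F}$ with segments of $\mathcal{F}^{\perp}$ to patch together a simple closed polygon enclosing $p$. By construction every vertex is a right-angle intersection between a leaf-arc and a cross-arc, so the interior angle at each vertex is either $\pi/2$ (convex case: the leaf through the vertex, continued past the incoming leaf-arc, escapes into the exterior of $P$) or $3\pi/2$ (concave case: it enters the interior). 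The corresponding exterior angles are $+\pi/2$ and $-\pi/2$ respectively, so $\sum_v(\text{ext.~angle})=\tfrac{\pi}{2}(c-c')$.

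For the index computation, pass first to the orientation double cover if $\mathcal{F}$ is not orientable (this doubles both $j(p)$ and the vertex counts compatibly, leaving the formula invariant). Let $\phi$ be the angle $\mathcal{F}$ makes with a fixed reference direction and $\psi$ the tangent angle of $P$ along its smooth parts; by homotopy invariance of the Poincar\'e index (since $P$ is homotopic in the punctured disc to a small loop around $p$), $2\pi\,j(p)=\oint_P d\phi$. On a leaf-arc one has $\phi=\psi$, hence $d\phi=d\psi$; on a perpendicular cross-arc one has $\phi=\psi\pm\pi/2$ (a \emph{constant} shift along an orthogonal trajectory), hence again $d\phi=d\psi$. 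Summing, $\oint_P d\phi=\oint_P d\psi$, which by Hopf's Umlaufsatz for simple polygons equals $2\pi-\sum_v(\text{ext.~angle})=2\pi-\tfrac{\pi}{2}(c-c')$. Dividing by $2\pi$ yields $j(p)=1-\tfrac{c-c'}{4}$.

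The main obstacle I anticipate is the polygon-construction step. Near an isolated singularity the foliation can exhibit loops, spirals accumulating on $p$, elliptic petals, or other pathologies, so producing a genuinely simple closed polygon whose vertices all sit at perpendicular leaf/transverse intersections is not automatic. The classical remedy, due to Bendixson, is a local \emph{sectorial decomposition} of a neighborhood of $p$ into elliptic, parabolic and hyperbolic sectors, inside each of which the polygon can be built piecewise and then glued across the separating leaves. A secondary concern is the smoothness of $\mathcal{F}$ needed to give $\mathcal{F}^{\perp}$ a meaning; for a purely continuous foliation one would have to first mollify, or else replace perpendicularity by a topological substitute ensuring that each cross-arc contributes trivially to $\oint d(\phi-\psi)$.
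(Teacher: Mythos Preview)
Your argument is correct and takes a genuinely different route from the paper's. The paper starts from a small circle $K$ meeting $\mathcal F$ in general position (finitely many Morse tangencies), pushes $K$ through ``horseshoe'' flow-boxes at each tangency to manufacture the circuit $P$, and then computes the angular variation by normalizing---admittedly without justification---to a polar model in which every horseshoe contributes $\pm\pi$ plus its aperture; this yields first the Bendixson tangency formula $j=1+\tfrac{I-E}{2}$ and then \eqref{Hamburger:formula} via $2I=c'$, $2E=c$. Your route replaces that heuristic normalization by the clean identity $d\phi=d\psi$ (valid because leaf-arcs and $\mathcal F^{\perp}$-arcs each make a \emph{constant} angle with $\mathcal F$) together with the Umlaufsatz; this is tidier, but it costs you a Riemannian metric and $C^1$ smoothness, and it proves \eqref{Hamburger:formula} only for circuits whose cross-arcs are \emph{orthogonal} transversals, whereas the subsequent surgeries in the paper use it for arbitrary ones (a gap you can close by isotoping each cross-arc within its flow-box to an $\mathcal F^{\perp}$-arc without disturbing $c,c'$). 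The paper's approach, by contrast, stays in the topological category and delivers as a by-product the tangency form $j=1+\tfrac{I-E}{2}$, which is precisely what gets reused in the Poincar\'e-style proof of Theorem~\ref{index-formula}. One small slip: when the orientation cover is branched over $p$, it does not literally ``double $j(p)$''; the relation is $i=2j-1$ (cf.\ Lemma~\ref{Hamburg:lifted}), though your formula still transfers correctly since $1-\tfrac{2c-2c'}{4}=2\bigl(1-\tfrac{c-c'}{4}\bigr)-1$.
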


\begin{figure}[h]
\centering
    \epsfig{figure=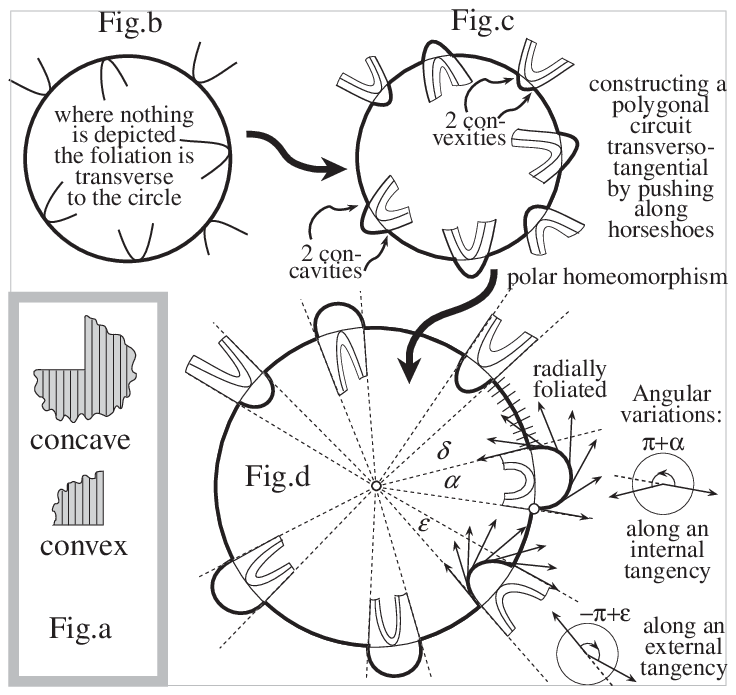,width=102mm}
  \caption{\label{Hamburg:fig}
  Naive approach to Hamburger's index formula}
\end{figure}

\begin{proof}[Proof of~\ref{Hamburger:lemma}] By general
position, we may assume that the circle $K$ has only finitely
many tangencies with the foliation of the Morse-type or
naively speaking {\it U-shaped\/} (cf.
Fig.\,\ref{Hamburg:fig}b). The ``U'' may go either inside or
outside the circle $K$ (locally at least), providing a
division in {\it internal\/} vs. {\it external\/} tangencies.
Choose about each
U-shaped leaf-arc tangent to $K$ a little tube (foliated box)
resembling a `horseshoe' (Fig.\,\ref{Hamburg:fig}c). By
pushing the circle $K$ inside resp. outside for each external
resp. internal tangencies gives the existence of a circuit
\hbox{$K'=:P$} of the desired type (again
Fig.\,\ref{Hamburg:fig}c, thick line).

We aim to compute the {\it Poincar\'e index\/}, which by
definition is the total angular variation of the tangent
during a complete circulation around any circle enclosing the
singularity (up to division by $2\pi$).

To this end we shall for simplicity assume (yet without
justification) that modulo a homeomorphism the configuration
can be normalized to one of the geometric type where the
horseshoes are so to speak in polar coordinates
(Fig.\,\ref{Hamburg:fig}d). On this picture it is further
assumed that the foliation is radial throughout the circular
segments.

As on
Fig.\,\ref{Hamburg:fig}d, let $\alpha_i$, resp.
$\varepsilon_j$ be the angles swept out by internal tangencies
resp. external ones, and $\delta_k$ be the remaining angles
corresponding to the arcs of
$K\cap K'$ which are radially foliated.
We have trivially $\sum_{i} \alpha_i+\sum_{j}
\varepsilon_j+\sum_{k} \delta_k=2\pi$.

An internal tangency, whose horseshoe sweeps out an angle
$\alpha_i$, causes an angular variation of $\pi+\alpha_i$ for
the turning tangent (compare Fig.\,\ref{Hamburg:fig}d).
Likewise an external tangency of angle $\varepsilon_j$ implies
a variation of $-\pi+\varepsilon_j$. Finally a circular
portion of the circuit
offering an angle of $\delta_k$
contributes to
a variation
of $\delta_k$.

Thus the total angular variation of the turning tangent(s) is
\begin{align*}
\sum_i (\pi+\alpha_i)+\sum_j (-\pi+\varepsilon_j)+\sum_k
\delta_k&= {I} \pi - E \pi+\underbrace{\textstyle\sum_i
\alpha_i+\sum_j \varepsilon_j+\sum_k \delta_k}_{=2\pi},
\end{align*}
where $I$, $E$ are the number of internal (resp. external)
tangencies. Since
the Poincar\'e index $j$ is the angular variation (divided by
$2\pi$), it follows that
\begin{equation}\label{Eq:Poinc-Bendi-Hamburg-index-formula}
j=1+\frac{I-E}{2}.
\end{equation}
Finally, from Fig.\,\ref{Hamburg:fig}c we have $2I=c'$, as
each internal tangency produces 2 concavities on the circuit
$K'$ deformed by the horseshoes, and likewise $2E=c$ as each
external tangency contributes to 2 convexities. This proves
formula \eqref{Hamburger:formula}.
\end{proof}

With Hamburger's formula \eqref{Hamburger:formula} we are
ready to complete the proof of
(\ref{index-small-if-no-loops}):

\medskip
\begin{proof}[Proof of Lemma~\ref{index-small-if-no-loops}]
Choose a polygonal circuit $K'$ as in (\ref{Hamburger:lemma}),
cf. also Fig.\,\ref{Hamburg:fig}c (thick line). We shall
describe two surgical processes on the circuit $K'$
diminishing the number $c'$ of concavities so as to make it
eventually equal to $0$, thereby proving our claim ($j\le 1$)
in the light
of Hamburger's formula $j=1-\frac{c-c'}{4}$.

As usual the proof involves some pictures. The goal is to kill
by surgery the
2 concavities generated by an inner tangency.

\begin{figure}[h]
\centering
    \epsfig{figure=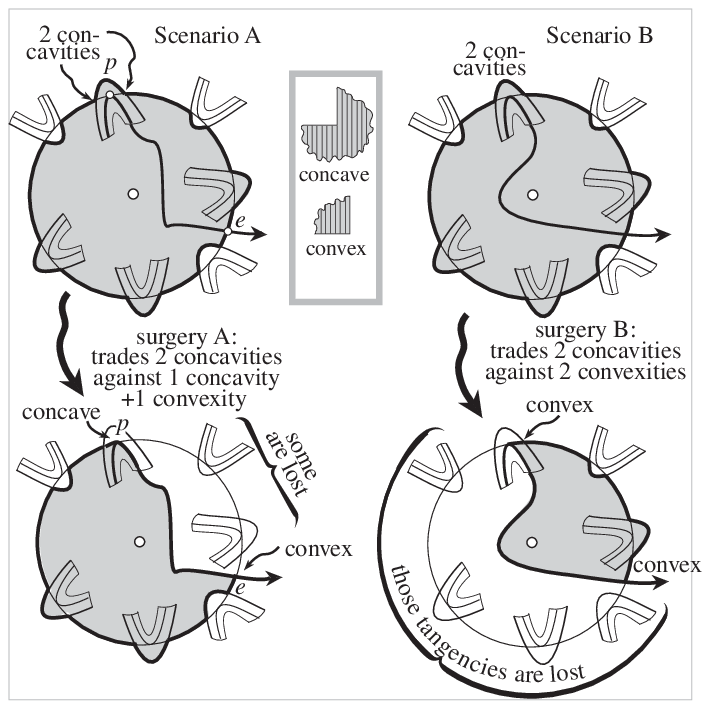,width=102mm}
  \caption{\label{Surgery:fig}
  Surgeries decreasing the number of concavities}
\end{figure}

If there is no inner tangencies,
we are done. Else, fix an inner tangency point $p$. If we
extend the U-shaped arc of leaf
emanating from $p$, then by assumption (no loops) one at least
of both ends must go to infinity. Otherwise a
Poincar\'e-Bendixson argument implies that the leaf starts
spiraling towards an asymptotic circle ({\it cycle limite})
which by Schoenflies must enclose the puncture. In this case
the index computed w.r.t. this circle is clearly $1$ (either
via Hamburger's formula or by the very definition of the
index). Hence the appropriate semi-leaf emanating from $p$
must eventually leave the bounded domain interior to $K'$
at some escape-point, say $e$. Close the segment of semi-leaf
$pe$ by the (unique) arc $A$ of $K'$ joining $p$ to $e$ such
that $pe+A$ is not null-homotopic (in the punctured plane).
This defines a new Jordan curve $K''=:J$.

As shown on Fig.\,\ref{Surgery:fig} two scenarios are possible
depending on whether the leaf-arc $pe$ closed by the sub-arc
of $K'$ circulated along the
clockwise orientation of the circle $K$ or $K'$ encloses or
not the puncture.

In the first case (Scenario A on Fig.\,\ref{Surgery:fig}), we
observe that the new Jordan curve $J$ is an admissible
polygonal circuit where $p$ is concave and $e$ is convex.
Hence the 2 concavities near $p$ on $K'$ are traded against 1
convexity (at $e$) and 1 concavity at $p$. Of course during
the process $K'\to J$ we may
loose several tangencies. In any event, the new numbers
$(c_1,c'_1)$ of convex resp. concave vertices (w.r.t. $J$)
satisfy $c_1\le c+1$ and more importantly $c'_1\le c'-1$.

In the second case (Scenario B on Fig.\,\ref{Surgery:fig}), we
observe that the 2 concavities are traded against 2
convexities, yielding thereby $c_1\le c+2$ and $c'_1\le c'-2$.

In both scenarios the number of concavities $c'$ decreases
under the surgery $K'\to J$, and after finitely many
iterations reaches (ineluctably) the value $0$. This completes
the proof of $j\le 1$ in view of Hamburger's formula.
\end{proof}

\subsection{Foliated index formula (Poincar\'e,
von Dyck, Brouwer, Hada\-mard, Hamburger, Kneser, Hopf,
Lefschetz)}

For a smooth vector field with isolated singularities on a
closed manifold, there is a well-known (remarkable) identity
between the total sum of the indices
at the  singularities
and the Euler characteristic of the manifold. This is known as
the {\it Poincar\'e-Hopf index formula}.
Its
intricate history may additionally involve Gauss, Cauchy,
Kronecker 1869, Poincar\'e 1881--1885 (\cite{Poincare_1881},
\cite{Poincare_1885}), Dyck 1888 \cite[p.\,501]{Dyck_1888},
Brouwer and Hadamard circa 1910 \cite{Hadamard_1910}, etc., up
to Hopf 1926 \cite{Hopf_1926}, not forgetting Lefschetz for
closely related works and the exposition in Alexandroff--Hopf
1935 \cite{Alexandroff-Hopf_1935}. (Again we may refer to
Mawhin 2000 \cite{Mawhin_2000} for a thorough historical
discussion, cf. also Hopf 1966 \cite{Hopf_1966}.)

For our application (\ref{Euler-obstruction:prop}) we
need only the surface case. Besides, there is a well-known
formulation of the index formula for line-fields or foliations
(compare e.g., Hopf 1946--56 \cite[p.\,113, 2.2 Thm
II]{Hopf_1946_1956} or Spivak 1975--79
\cite[p.\,331]{Spivak_1975-1979}):

\begin{theorem}\label{index-formula} For a foliated closed
surface $F$ with isolated singularities, the total sum of the
indices is equal to the Euler characteristic of the surface:
\begin{equation}
\textstyle\sum_{p\in F} j(p)=\chi(F).
\end{equation}
\end{theorem}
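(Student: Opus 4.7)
The plan is to reduce the foliated index formula to the classical \emph{Poincar\'e-Hopf formula for vector fields} on closed surfaces, which the excerpt has already imported (cf.\ Hopf 1946 \cite{Hopf_1946_1956}, Spivak 1979 \cite{Spivak_1975-1979}).

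First I would pass to the \emph{orientation double cover} $\pi\colon \widetilde{F}\to F$ of the tangent line-field of the foliation: a 2-fold cover, branched precisely at those singularities $p$ where the line-field is locally non-orientable (equivalently $j(p)\notin\mathbb{Z}$), whose total space is a closed surface carrying a genuinely oriented foliation. An unbranched singularity of index $j$ lifts to two singularities each of index $j$; a branched singularity lifts to a single singularity of index $2j-1$, as one reads off from the local prong model (a $k$-prong model on $F$ has $2k$ prongs upstairs, so its index changes from $1-k/2$ to $1-k = 2(1-k/2)-1$). Combined with the Riemann-Hurwitz relation $\chi(\widetilde{F})=2\chi(F)-n_2$ (with $n_2$ the number of branch points), the ``$-n_2$'' correction appears identically on both sides of the desired identity, so the formula on $F$ becomes equivalent to its counterpart on $\widetilde{F}$.

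Next, on the now-oriented closed surface $\widetilde{F}$, I would invoke the \Kerek-Whitney theorem \cite{Kerekjarto_1925, Whitney33} to produce a compatible continuous flow, equivalently a vector field with the same singular data as the foliation. The classical Poincar\'e-Hopf formula for vector fields on closed surfaces then finishes the argument.

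The main obstacle is not the formula per se (by now well-trodden) but the book-keeping of the double cover---verifying that indices and Euler characteristics interlock correctly in the half-integral case, as sketched above. A self-contained alternative, internal to this paper modulo Hamburger's formula \eqref{Hamburger:formula}, would bypass the cover altogether: triangulate $F$ so that its 0-skeleton contains every singularity, apply \eqref{Hamburger:formula} inside each vertex star to read off $j(p) = 1 - (c_p - c'_p)/4$, and match $\sum j(p)$ to $\chi(F)$ via the classical Euler count of vertices, edges and faces. The price is some general-position care about how the 1-skeleton meets the leaves; the reward is an argument in the same spirit as the Hamburger-Bendixson machinery already deployed above.
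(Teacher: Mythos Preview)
Your main approach---passing to the orientation double cover, tracking indices via the relation $i=2j-1$ at non-orientable singularities together with Riemann--Hurwitz for $\chi$, then invoking \Kerek--Whitney and the classical Poincar\'e--Hopf formula---is exactly the paper's second proof (the relation you state is Lemma~\ref{Hamburg:lifted}). Your alternative sketch via triangulation and Hamburger's formula is essentially the paper's first proof (Poincar\'e 1885), with one small difference worth noting: the paper places singularities in the \emph{interiors} of triangles rather than at vertices, so that edge-tangencies cancel in pairs and the only contributions come from how leaves pass through the (non-singular) vertices, yielding $\sum(I-E)=2\sigma_0-3\sigma_2$ directly.
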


We present 2 proofs of this superb
theorem (in Lefschetz's 1967 appreciation
\cite[p.\,120]{Mawhin_2000}). The first is mostly inspired
form Poincar\'e's original paper of 1885. The second (easier
to find alone) gives only a reduction to the vector field
case, thus not very insightful, so safely to be skipped.

\smallskip
{\small

{\sc Historical quiz.} It is not perfectly clear to the
writer, who first formulated the index formula for foliations
(\ref{index-formula}). Of course, loosely speaking it is
Poincar\'e 1885 \cite[p.\,203--8]{Poincare_1885}, since the
foliated case is very akin to the flow case, either by passing
to a double cover orienting the foliation\footnote{This
standard technique is implicit (already) in Kneser 1921
\cite[p.\,85]{Kneser_1921}} or by noticing that Poincar\'e's
argument transposes better than {\it mutatis mutandis} to the
foliated situation, as practically nothing must be changed to
it (compare the next section). Strictly speaking, the first
source
might be Hamburger 1924 \cite[p.\,58--62]{Hamburger_1924},
where {\it a simple proof under very general assumptions is
given} (according to Hamburger 1940 \cite[Footnote~4,
p.\,64]{Hamburger_1940}). Much of this quiz is clarified by
reading Hamburger 1924 \cite[Fu{\ss}note 17,
p.\,57]{Hamburger_1924} (reproduced below as {\bf [Ham]}),
where
Kneser is mentioned as being also well aware of the foliated
index formula, cf. Kneser 1921 \cite[p.\,83]{Kneser_1921}
(={\bf [Kne]} below), where however [by a little
inadvertence?] the semi-integral nature of the index is not
emphasized, and
the reader is ``just'' referred to Dyck 1888 \cite{Dyck_1888}.
Here are the
relevant extracts:

\def\ll{\hskip1.5pt}

 {\bf [Ham]} $^{17)}$ Nach Abschlu{\ss} meines Manuskriptes erfuhr ich
durch eine freundliche briefliche Mitteilung von Herrn
{H{\ll}e\,l{\ll}l{\ll}m{\ll}u{\ll}t{\ll}h{\ll}
K{\ll}n{\ll}e{\ll}s{\ll}e{\ll}r}, da{\ss} dieser auch im
Besitze eines Beweises f\"ur die
P{\ll}o{\ll}i{\ll}n{\ll}c{\ll}a{\ll}r{\ll}\'e{\ll}sche Formel
in sehr allgemeinen F\"allen ist. Seine Methode, den Index
$i_{\varkappa}$ der Singularit\"at zu bestimmen, ist der
Methode der vorliegenden Note sehr \"ahnlich. Vergleiche die
Andeutungen der K{\ll}n{\ll}e{\ll}s{\ll}e{\ll}r{\ll}schen
Methoden und Ergebnisse in dem Verhandlungsberichte
des Naturforschertages in Jena. Jahresb. d. D.\,M.\,V. 30
(1921),
S.\,84.

 {\bf [Kne]} Bei Kurvenscharen mit Singularit\"aten wird jedem
Ausnahmepunkt eine Zahl zugeordnet, und $k$ ist gleich der
Summe dieser Zahlen, vgl. Dyck, Math. Ann.~32. }


\subsection{Poincar\'e's argument of 1885}

The proof presented below is, apart from minor variations (to
suit modern conventions), a `condensed' copy of Poincar\'e's
original argument (compare \cite[1885,
p.\,203--8]{Poincare_1885}), which is already announced in a
{\it Comptes Rendus\/} Note of 1881 \cite{Poincare_1881}. In
fact the latter assumes orientability of the surface and of
the foliation (i.e., argues with vector fields), yet
his argument
works without those provisos.
(Overlooking
the details of Poincar\'e's argument, one gets the wrong
impression that he only establishes the formula for special
singularities ({\it cols, n{\oe}uds} and {\it foyers}), yet on
p.\,208 (of {\it loc.\,cit.})  the general
case is established.)

\medskip
\begin{proof}[Proof of \ref{index-formula}] We aim to
compute the sum of the indices at the singularities,
relating this to the Euler characteristic
$\chi=\sigma_0-\sigma_1+\sigma_2$, where $\sigma_i$ is the
number of $i$-simplices ($i=0,1,2$) of any triangulation. (As
usual such simplices are resp. termed {\it vertices}, {\it
edges} and {\it triangles}.) By general position, we may
triangulate the surface $F$ so that each singularity lies in
the interior of a triangle, and each circuit bounding a
triangle has only finitely many tangencies with the foliation.
By invariance under deformation (homotopy\footnote{Jargon
coined by Dehn--Heegaard in 1907 ({\it sauf erreur!\/}).}), we
can extend the summation to all triangles (precisely their
boundaries), as those
lacking singularities
will not contribute (to the total sum of indices). (Of course,
to triangulate concrete pretzels (spheres with handles) as
well as their non-orientable avatars (cross-capped spheres),
one does not need Rad\'o's triangulation theorem
\cite{Rado_1925}, but it is still somehow implicitly used to
classify surfaces.)

Recall, from
Eq.\,\eqref{Eq:Poinc-Bendi-Hamburg-index-formula}, that the
index is given by Hamburger's formula (1922) (already
in Bendixson 1901 \cite[p.\,39]{Bendixson_1901}, and
even
in Poincar\'e 1885
\cite[p.\,203]{Poincare_1885}):
\begin{equation}\label{Eq:Poincare_et-ali}
j=1+\frac{I-E}{2},
\end{equation}
where $I$, $E$ are the number of internal resp. external
tangencies (or {\it contacts\/} as Poincar\'e calls them).

Poincar\'e observes \cite[p.\,207]{Poincare_1885} that each
tangency occurring along an edge of the triangulation is
counted {\it twice\/}, once as an internal and once as an
external contact, hence does not contribute to the total sum
of the indices.

Hence he needs only to worry about contacts occurring at
vertices (of the triangulation). Fix a vertex and assume it to
be the apex of $\nu$ triangles.
A leaf ({\it trajectoire} in his context) through the vertex
will traverse 2 triangles, while having an external contact
with the $\nu-2$ remaining  triangles. Thus,
\begin{equation}\label{Poinc:excess}
\sum_{triangles} (I-E)=\sum_{vertices}
(2-\nu)=2\sigma_0-3\sigma_2,
\end{equation}
since $\sum \nu= 3 \sigma_2$ as each $2$-simplex is counted
thrice (once for each of its apex).

Next, recall the following relation holding in any
triangulated closed surface
\begin{equation}\label{Eq:Descartes-Euler}
3\sigma_2=2\sigma_1.
\end{equation}
(This is easily verified by a double counting argument of the
incidence relation $I$ among pairs of
triangles given by {\it adjacency\/}: mapping an element of
$I$ to the common edge yields a 2-to-1 surjection to the
$1$-simplices, whereas projecting on the (first) factor yields
a 3-to-1 map to the set of all $2$-simplices.)

Thus,  we find for the total sum of indices (computed along
the contours of all triangles
of the triangulation):
\def\ziehen{\hskip-2pt}
\begin{align*}
\sum_{triangles} \ziehen
j&\buildrel{\eqref{Eq:Poincare_et-ali}}\over{=}\ziehen\sum_{triangles}
\bigl( 1+\frac{I-E}{2}\bigr)
\buildrel{\eqref{Poinc:excess}}\over{=}\sigma_2+\bigl(
\frac{2\sigma_0-3\sigma_2}{2}\bigr)
\buildrel{\eqref{Eq:Descartes-Euler}}
\over{=}\sigma_0-\sigma_1+\sigma_2=\chi(F).
\end{align*}
\vskip-15pt
\end{proof}

\subsection{Reduction of the index formula to the flow case}

\begin{proof}[Another proof of \ref{index-formula}] Now, we
just recall a formal reduction to the classical index formula
for flows (taking the latter's validity for granted via some
external source of your preference).

If the foliation $\cal F$ is orientable, then there is a
compatible flow by \Kerek-Whitney \cite{Kerekjarto_1925},
\cite{Whitney33}, and we assume this case settled.

If not, $\cal F$ determines a double cover $F^*\to F$ over
which the lifted foliation $\cal F^*$ is orientable. More
slowly, one first removes the singular set $S\subset F$ of the
foliation, and local orientations of leaves defines a double
cover $\Sigma\to F-S$, which
compatifies as a branched cover $\pi\colon \Sigma^*\to F$ by
filling over the punctures via Riemann's trick (cf. e.g.
\cite[2.16]{Gabard_2011_Ebullition}). Then by the
Riemann-Hurwitz formula:
\begin{equation}\label{Riemann-Hurwitz}
\chi(\Sigma^*)=2 \chi (F)- \deg(R),
\end{equation}
where $\deg(R)$ counts the ramification.

Each singularity of the foliation $\cal F$ is either {\it
orientable\/} ($\cal O$) or not ($\cal N$), yielding a
partition $S={\cal O} \sqcup \,{\cal N}$.
Singularities in $\cal N$ (non-orientable) are precisely those
responsible of the ramification so that $\# ({\cal
N})=\deg(R)$.  For $p\in \cal O $, let
$\pi^{-1}(p)=\{q_1,q_2\}$ and for $p\in \cal N $, let
$\pi^{-1}(p)=\{p^{*}\}$.

For a non-orientable singularity, recall the following
relation
$$
j=\frac{i+1}{2},
$$
between its index $j$ and the index $i$ of the lifted
orientable foliation  (compare Lemma~\ref{Hamburg:lifted}
below)

Finally, computing the total sum of the indices we find
\begin{align*}
\sum_{p\in S} j(p, \cal F)&=\sum_{p\in {\cal O}} j(p, {\cal
F})+\sum_{p\in {\cal N}} j(p, {\cal F})\cr
&=\frac{1}{2}\Bigl(\sum_{q\in \pi^{-1}{\cal O}} i(q, {\cal
F}^*)\Bigr)+\sum_{p^*\in \pi^{-1}{\cal N}} \frac{i(p^*, {\cal
F}^*)+1}{2}\qquad \text{(Lemma~\ref{Hamburg:lifted})}\cr
&=\frac{1}{2}\Bigl(\sum_{q\in \pi^{-1}{S}} i(q, {\cal
F}^*)+\deg(R)\Bigr)\cr
&=\frac{1}{2}\bigl(\chi(\Sigma^*)+\deg(R)\bigr) \qquad
\text{(Poincar\'e's index formula)}\cr &=\chi(F) \hskip3cm
\text{(by Riemann-Hurwitz \eqref{Riemann-Hurwitz})}.
\end{align*}
\vskip-20pt
\end{proof}

\begin{lemma}\label{Hamburg:lifted}
Given a non-orientable isolated singularity $p$ of a foliation
$\cal F$ on an elementary (piece of) surface $F\approx {\Bbb
R}^2$,  let $\pi\colon F^{*}\to F$ be the double cover
orienting the foliation and let ${\cal F}^*$ be the lifted
orientable foliation. Then
the index $i(p^*, {\cal F}^*)$ of the lifted foliation at the
unique point $p^*$ lying above $p$ and the original index
$j(p, {\cal F})$ downstairs are related by:
\begin{equation}
j(p, {\cal F})=\frac{i(p^*, {\cal F}^*)+1}{2}
\end{equation}
\end{lemma}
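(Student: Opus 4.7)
The plan is to reduce the lemma to an explicit local computation on the branched double cover. Since $p$ is a non-orientable singularity of the line field $\mathcal{F}$, the orientation double cover of a small punctured neighborhood of $p$ is connected (precisely because local orientations of leaves cannot be extended around $p$); so, after filling in the puncture via Riemann's trick (as invoked earlier in the excerpt), the map $\pi\colon F^{*}\to F$ is locally a branched double cover of order~$2$ at $p^{*}$. By the standard topological normal form for branched covers of surfaces at isolated branch points, we may pick coordinates in which $\pi$ is modelled by the map $w\mapsto z=w^{2}$ from a disc $D^{*}\ni p^{*}=0$ onto a disc $D\ni p=0$. Because the Poincar\'e index is a topological invariant (depending only on the degree of a tangent-direction map into $S^{1}$ resp.\ $\mathbb{R}\mathrm{P}^{1}$), working in this topological model is harmless.

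Next I parametrize a small circle $C^{*}\subset D^{*}$ around $p^{*}$ by $w=\epsilon e^{i\phi}$ with $\phi\in[0,2\pi]$; its image $C=\pi(C^{*})$ is then parametrized by $z=\epsilon^{2}e^{i\theta}$ with $\theta=2\phi$, so that \emph{one} turn around $C^{*}$ corresponds to \emph{two} turns around $C$. Choose continuous lifts $\widetilde{\psi}^{*}\colon[0,2\pi]\to\mathbb{R}$ of the angle of the oriented tangent to $\mathcal{F}^{*}$ along $C^{*}$, and $\widetilde{\psi}\colon[0,4\pi]\to\mathbb{R}$ of the angle of the (unoriented) line field $\mathcal{F}$ along $C$ traversed twice. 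By definition of the respective indices,
\begin{equation*}
\widetilde{\psi}^{*}(2\pi)-\widetilde{\psi}^{*}(0)=2\pi\,i(p^{*},\mathcal{F}^{*}),\qquad
\widetilde{\psi}(4\pi)-\widetilde{\psi}(0)=2\cdot 2\pi\,j(p,\mathcal{F}).
\end{equation*}

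The bridge between the two lifts comes from the chain rule $dz=2w\,dw$: a tangent direction with angle $\psi^{*}$ at $w$ is pushed by $d\pi$ to a direction with angle $\psi^{*}+\arg(2w)=\psi^{*}+\phi$ at $z=w^{2}$. Matching the two lifts at $\phi=0$, where $\arg w=0$, this pointwise shift promotes (by continuity along the curve) to the identity
\begin{equation*}
\widetilde{\psi}(2\phi)=\widetilde{\psi}^{*}(\phi)+\phi\qquad(\phi\in[0,2\pi])
\end{equation*}
in $\mathbb{R}$. Evaluating at $\phi=2\pi$ and subtracting the value at $\phi=0$,
\begin{equation*}
4\pi\,j(p,\mathcal{F})=\widetilde{\psi}(4\pi)-\widetilde{\psi}(0)=\bigl[\widetilde{\psi}^{*}(2\pi)-\widetilde{\psi}^{*}(0)\bigr]+2\pi=2\pi\,i(p^{*},\mathcal{F}^{*})+2\pi,
\end{equation*}
whence the announced relation $j(p,\mathcal{F})=\frac{i(p^{*},\mathcal{F}^{*})+1}{2}$.

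The main obstacle I expect is a conceptual (not computational) one: carefully justifying the local branched model $z=w^{2}$ for $\pi$ near a non-orientable singularity, and convincing oneself that the $+\arg w$ contribution produced by $d\pi$ accumulates to exactly the extra $2\pi$ that distinguishes $2j$ from $i$. Once those points are clear, the remainder is a bookkeeping exercise with winding numbers; in particular the formula correctly predicts the half-integrality of $j$ (since $i$ is an integer) and is consistent with simple test cases such as the \emph{first return} foliation near a M\"obius core (where $i=0$ upstairs yields $j=\frac12$ downstairs).
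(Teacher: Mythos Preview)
Your argument is correct, and it is essentially the ``from scratch'' route that the paper explicitly mentions as an alternative (with a pointer to Spivak). The paper itself takes a different path: it invokes Hamburger's formula $j=1-\tfrac{c-c'}{4}$ (Lemma~\ref{Hamburger:lemma}) on a polygonal circuit $P$ about $p$, observes that the branched cover is topologically $z\mapsto z^{2}$, and notes that the lifted circuit $P^{*}=\pi^{-1}(P)$ simply has its counts of convex and concave vertices doubled, $c_{*}=2c$ and $c'_{*}=2c'$. Plugging these into Hamburger's formula upstairs gives $i=1-\tfrac{2c-2c'}{4}=2j-1$, i.e.\ your relation. So the paper's proof is purely combinatorial once Hamburger's formula is in hand, whereas yours works directly with the angular definition via the chain rule $dz=2w\,dw$ and bypasses Hamburger entirely. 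Your approach is arguably more self-contained (and is the one Spivak gives); the paper's approach has the virtue of reusing a tool already developed for other purposes in the paper. The one point worth flagging is that your use of $d\pi$ tacitly assumes a smooth (indeed conformal) local model, but you correctly preempt this by noting that the index is a topological invariant, so computing in the model $z=w^{2}$ is legitimate.
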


\begin{proof}
As we already used it (twice!), we permit us to deduce this
from Hamburger's formula
(although a proof from the scratch definition of the index
might also be possible, cf. for this Spivak \cite[Lemma~19,
p.\,328]{Spivak_1975-1979}). Indeed choose $P$ a polygonal
circuit as in (\ref{Hamburger:lemma}) enclosing the point $p$,
and denote again by $c$ resp. $c'$ the number of convex vs.
concave vertices. By covering space theory, the map $\pi$ is
topologically equivalent to $z\mapsto z^2$ in complex
coordinates on the punctured complex plane ${\Bbb C}^*$.
Lifting the circuit $P$ to the covering $F^*$ yields an
admissible circuit $P^{*}=\pi^{-1}(P)$
having
doubled quantities of convexities resp. concavities, i.e.
$c_*=2c$ and $c_*'=2c'$. Plugging into Hamburger's formula
(\ref{Hamburger:lemma}) gives the asserted
relation:
\begin{align*}
i(p^*,{\cal F}^*)=1-\frac{c_*-c_*'}{4}&=1-\frac{2c-2c'}{4}\cr
&=2\bigl( 1-\frac{c-c'}{4}\bigr)-1
=2j(p, {\cal F})-1.
\end{align*}
\vskip-20pt
\end{proof}

\medskip

{\small {\bf Acknowledgements.} The author wishes to thank
Andr\'e Haefliger, Claude Weber, David Gauld,
Jean-Claude Hausmann, Daniel Asimov and Mathieu Baillif  for
stimulating discussions.}

{\small

}

{
\hspace{+5mm} 
{\footnotesize
\begin{minipage}[b]{0.6\linewidth} Alexandre
Gabard

Universit\'e de Gen\`eve

Section de Math\'ematiques

2-4 rue du Li\`evre, CP 64

CH-1211 Gen\`eve 4

Switzerland

alexandregabard@hotmail.com
\end{minipage}
\hspace{-25mm} }

\end{document}